\pdfoutput=1

\documentclass[11pt]{amsart}

\usepackage{mathtext}
\usepackage[T2A]{fontenc}
\usepackage[russian,english]{babel}
\usepackage[margin=1.1in]{geometry}
\usepackage{graphicx}
\usepackage{amsfonts,amssymb,amscd,amsmath,amsthm}
\usepackage{mathrsfs}
\usepackage{epsf}
\usepackage{wrapfig}
\usepackage{cite}
\usepackage[hyper]{amsbib}

\newcommand{\R}{\mathbb{R}}
\DeclareMathOperator \conv {conv} 
\DeclareMathOperator \dmt {dmt}

\begin{document}

\theoremstyle{plain}
\newtheorem{theorem}{Theorem}
\newtheorem{lemma}{Lemma}
\newtheorem{conjecture}{Conjecture}
%=================================================================

\title{Polyhedral characteristics of balanced and~unbalanced~bipartite subgraph problems}

\author{Vladimir Bondarenko, Andrei Nikolaev, Dzhambolet Shovgenov}
\thanks {The research was partially supported by the Russian Foundation for Basic Research, Project 14-01-00333, the President of Russian Federation Grant MK-5400.2015.1, and the initiative R\&D VIP-004 AAAA-A16-116070610022-6.}

\address{%
Department of Discrete Analysis, P.G. Demidov Yaroslavl State University, Sovetskaya, 14, Yaroslavl, 150000, Russia
}
\email {bond@bond.edu.yar.ru, andrei.v.nikolaev@gmail.com, djsh92@mail.ru}

\begin{abstract}
We study the polyhedral properties of three problems of constructing an optimal complete bipartite subgraph (a biclique) in a bipartite graph.
In the first problem we consider a balanced biclique with the same number of vertices in both parts and arbitrary edge weights.
In the other two problems we are dealing with unbalanced subgraphs of maximum and minimum weight with nonnegative edges.
All three problems are established to be NP-hard.
We study the polytopes and the cone decompositions of these problems and their 1-skeletons. 
We describe the adjacency criterion in 1-skeleton of the polytope of the balanced complete bipartite subgraph problem. 
The clique number of 1-skeleton is estimated from below by a superpolynomial function.
For both unbalanced biclique problems we establish the superpolynomial lower bounds on the clique numbers of the graphs of nonnegative cone decompositions. 
These values characterize the time complexity in a broad class of algorithms based on linear comparisons.
\end {abstract}

\keywords{Biclique, 1-skeleton, cone decomposition, clique number, NP-hard problem.}

\maketitle

%%%%%%%%%%%%%%%%%%%%%%%%%%%%%%%%%%%%%%%%%%

\section* {Introduction}

We consider the well-known balanced complete bipartite subgraph problem.

\textbf{Balanced complete bipartite subgraph (BCBS).}
Given a bipartite graph $G=(U,V,E)$ and a positive integer $k \leq |U|+|V|$. 
Are there two disjoint subsets $U_{x} \subseteq U$ and $V_{x} \subseteq V$ such that $|U_{x}| = |V_{x}| = k$ and $\{u,v\} \in E$ for any pair of vertices $u \in U_{x}$, $v \in V_{x}$.

The problem is NP-complete by reduction from Clique \cite {Garey, Johnson}.
The optimization version of the problem where it is required to find a balanced complete bipartite subgraph with a maximum number of vertices is extremely hard even to approximate (no constant factor approximation algorithms are known \cite {Feige}).
A significant number of works are devoted to the study of polynomially solvable special cases of the problem \cite {Arbib, Mubayi}.
Note that a complete bipartite graph is also called a biclique \cite {Diestel}.

It is interesting that the related unbalanced complete bipartite subgraph problem (if we replace the condition $|U_{x}| = |V_{x}| = k$ by $|U_{x}| + |V_{x}| = k$) is polynomially solvable.
The algorithm is based on the well-known K\"{o}nig's theorem \cite{Diestel}.

\begin{theorem} [K\"{o}nig]
In any bipartite graph, the number of edges in a maximum matching equals the number of vertices in a minimum vertex cover.
\end {theorem}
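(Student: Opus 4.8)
The plan is to prove the two inequalities separately. Write $G = (U, V, E)$, and let $\nu(G)$ denote the maximum matching size and $\tau(G)$ the minimum vertex cover size. The easy inequality is $\nu(G) \le \tau(G)$: fix any matching $M$ and any vertex cover $C$. Since the edges of $M$ are pairwise non-adjacent, no single vertex can cover two of them, so covering all $|M|$ edges requires at least $|M|$ distinct vertices of $C$; hence $|M| \le |C|$. Taking $M$ maximum and $C$ minimum gives $\nu(G) \le \tau(G)$, and this half in fact holds for arbitrary graphs.

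The substance is the reverse inequality $\tau(G) \le \nu(G)$, for which I would construct, from a fixed maximum matching $M$, a vertex cover of size exactly $|M|$. Let $U_{0} \subseteq U$ be the $M$-unmatched vertices of $U$, and let $Z$ be the set of all vertices reachable from $U_{0}$ by $M$-alternating paths that start with a non-matching edge. Put $S = Z \cap U$, $T = Z \cap V$, and define $C = (U \setminus S) \cup T$. I would then verify three facts: (i) there is no edge of $G$ between $S$ and $V \setminus T$, since such an edge would extend an alternating path and thus belong to $Z$, contradicting the definition of $T$; this shows that $C$ covers every edge. (ii) Every vertex of $T$ is $M$-matched, for an unmatched vertex in $T$ would complete an $M$-augmenting path from $U_{0}$, contradicting the maximality of $M$. (iii) The matching partner of any vertex of $T$ lies in $S$, so no edge of $M$ joins $U \setminus S$ to $T$.

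From (ii) and (iii) it follows that every vertex of $C$ is incident to an edge of $M$, and that distinct vertices of $C$ meet distinct edges of $M$; therefore $|C| \le |M|$, which is $\tau(G) \le \nu(G)$. Combining the two inequalities yields $\nu(G) = \tau(G)$, the assertion of the theorem. The crux of the argument, and the only place where both the maximality of $M$ and the bipartite structure are essential, is step (ii): the absence of unmatched vertices in $T$ via the non-existence of augmenting paths. In a non-bipartite graph, alternating paths can revisit a part and spoil the clean correspondence between $C$ and $M$, so I expect this to be the main obstacle to get right.

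Alternatively, one could route the whole statement through max-flow min-cut: orient every edge from $U$ to $V$ with infinite capacity, attach a source $s$ to each $u \in U$ and each $v \in V$ to a sink $t$ by unit-capacity arcs. Integral maximum $s$--$t$ flows then correspond to matchings, and finite $s$--$t$ cuts to vertex covers, so the two optimal values coincide by the max-flow min-cut theorem together with the integrality of maximum flows under integer capacities. I would nonetheless favor the direct alternating-path proof, as it is self-contained and makes the extremal role of $M$ transparent.
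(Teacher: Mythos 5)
Your proof is correct. Note, however, that the paper does not prove this statement at all: K\"onig's theorem is quoted as a classical result with a reference to Diestel's textbook, and only two consequences of it are used later (that the proof is constructive, yielding a minimum vertex cover from a maximum matching, and hence a maximum independent set). Your argument is the standard alternating-path proof: the easy direction $\nu\le\tau$ from the observation that no vertex covers two matching edges, and the hard direction by taking $C=(U\setminus S)\cup T$ built from the set of vertices reachable from the unmatched part of $U$ by alternating paths. Steps (i)--(iii) are exactly the right things to verify, and the injection from $C$ into $M$ goes through. One small point of hygiene: for the final counting you need every vertex of $U\setminus S$ to be $M$-matched, which requires $U_{0}\subseteq S$; your definition of $Z$ (``reachable by paths that start with a non-matching edge'') should explicitly include the trivial paths, i.e.\ $U_{0}$ itself, or the unmatched vertices of $U$ would land in the cover and break the bound $|C|\le|M|$. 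With that convention made explicit, the proof is complete, and it has the added virtue (relevant to the paper's use of the theorem) of being constructive: the cover $C$ is computable from a maximum matching, which is precisely what the authors invoke when they pass from Hopcroft--Karp to a minimum vertex cover and a maximum independent set. The max-flow min-cut route you sketch as an alternative is also valid but imports more machinery than the self-contained argument.
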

	
A maximum matching in a bipartite graph can be found in time $O(|E| \sqrt{|U|+|V|})$ by Hopcroft-Karp algorithm \cite {Hopcroft}.
The proof of K\"{o}nig's theorem is constructive, on its basis one can find a vertex cover.
All vertices that are not included in a minimum vertex cover form a maximum independent set.
If we consider a complement of a bipartite graph, then an independent set is transformed into a complete bipartite subgraph.
	
The problems related to the construction of complete bipartite subgraphs often arise in various applied fields.
In particular, in computational biology for biclustering of expression data of genes a bipartite graph is constructed in which one side represent genes and the other side their properties. The goal is to find a maximum balanced biclique in such graph \cite{Cheng}.
The BCBS problem is also applied in VLSI theory for PLA-folding to reduce the size of programmable logical arrays (PLA) \cite{Ravi}.
	
We consider three optimization versions of the biclique problem in a weighted bipartite graph.
	
\textbf{Weighted balanced complete bipartite subgraph (WBCBS).}
Given a complete bipartite graph $G=(U,V,E)$, $|U|=|V|=n$, a weight function $C: E \rightarrow \R$, and a positive integer $k \leq n$. 
It is required to find a balanced complete bipartite subgraph $G_{x} = (U_{x},V_{x},E_{x})$ with the largest (least) sum of edge weights such that $|U_{x}| = |V_{x}| = k$.
	
\textbf{Maximum weighted complete bipartite subgraph (maxWCBS).}
Given a complete bipartite graph $G=(U,V,E)$, $|U|=|V|=n$, a weight function $C: E \rightarrow \R_{+}$, and a positive integer $k \leq 2n$.
It is required to find a complete bipartite subgraph $G_{x} = (U_{x},V_{x},E_{x})$ with the sum of edge weights being as large as possible such that $|U_{x}| + |V_{x}| = k$.
	
\textbf{Minimum weighted complete bipartite subgraph (minWCBS).}
Given a complete bipartite graph $G=(U,V,E)$, $|U|=|V|=n$, a weight function $C: E \rightarrow \R_{+}$, and a positive integer $k \leq 2n$.
It is required to find a complete bipartite subgraph $G_{x} = (U_{x},V_{x},E_{x})$ with the sum of edge weights being as small as possible such that $|U_{x}| + |V_{x}| = k$.
	
We note that for the problem of a balanced biclique, the question of whether the problem is on minimum or maximum is not crucial. 
In both cases, the solution is a $k$-balanced complete bipartite subgraph.
Further, we only consider the maximum problem. The minimum problem can be obtained from it by inverting the sign of edge weights.
	
At the same time, for an unbalanced case, the minimum and maximum problems are fundamentally different.
Indeed, if we consider a graph $G$ with identical positive edge weights (all missing edges have a zero weight for the maximum problem, or an infinity weight for the minimum problem, thus we can assume that the graph is complete), then the maximum is achieved on a balanced or almost balanced biclique with the largest number of edges, while for the minimum problem biclique has to be as unbalanced as possible.
In order to take this into account, we consider these problems with nonnegative edge weights.

\section{Cone decompositions}

The object of the research is the construction of cone decomposition.
Let $X$ be a finite set of points of $\R^{d}$.
We consider the problem of maximizing a linear objective function over $X$:
$$\left\langle c,x\right\rangle \rightarrow \max, \ x \in X.$$
We denote by
\begin {equation}
K(x) = \{c\in \R^{d}:\ \left\langle c,x\right\rangle\geq \left\langle c,y\right\rangle,\ \forall y \in X\}.
\label{ConeDefinition}
\end {equation}
Since $K(x)$ is the set of solutions of a finite system of homogeneous linear inequalities, it is a convex polyhedral cone.
	
Given that
$$\bigcup_{x\in X} K(x) = \R^{d},$$
the set of all cones $K(x)$ is called the \textit {cone decomposition} of the space $\R^{d}$ by the set $X$.
The cone decomposition is similar to the Voronoi diagram, exactly coinciding with it, if the Euclidean norms of all points of the set $X$ are equal to each other.
	
We consider the graph of the cone decomposition with the cones being the vertices, and two cones $K(x)$ and $K(y)$ are adjacent if and only if they have a common facet:
$$\dim (K(x) \cap K(y)) = d - 1.$$
	
We denote by $\omega (X)$ the clique number, the number of vertices in a maximum clique, of the graph of the cone decomposition of the space $\R^{d}$ by the set $X$. It is known \cite {Bondarenko-Maksimenko} that the complexity of the direct type algorithms, based on linear comparisons, of finding the minimum (or maximum, if we change the sign of inequality in the definition of cone) of a linear objective function over the set $X$, or, in other words, finding the cone $K(x)$ with the vector $c$, cannot be less than the value of $\omega (X)-1$.
Thus, $\omega (X)$ characterizes the time complexity in a broad class of algorithms.
	
Let $M(X)$ be the convex hull of $X$: $M(X) = \conv (X)$.
The convex hull of a finite set of points is a convex polytope that is called the polytope of the problem.
	
We note that the following lemma holds for the cone decomposition over the entire space $\R^{d}$ (\ref{ConeDefinition}) \cite {Bondarenko-Maksimenko}.
	
\begin{lemma} \label{Cones-vertices}
Vertices $x$ and $y$ of the polytope $M(X)$ are adjacent if and only if the cones $K(x)$ and $K(y)$ have a common facet.
\end{lemma}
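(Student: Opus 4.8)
The plan is to read $K(x)$ as the normal cone of the polytope $M(X)$ at the point $x$, exploiting that maximizing $\langle c,\cdot\rangle$ over the finite set $X$ is the same as maximizing it over $M(X)=\conv(X)$. The first step is the identity
\[
K(x)\cap K(y)=\{c\in\R^{d}:\ \langle c,x\rangle=\langle c,y\rangle\geq\langle c,z\rangle\ \ \forall z\in X\},
\]
which holds because $c\in K(x)$ forces $\langle c,x\rangle\geq\langle c,y\rangle$ while $c\in K(y)$ forces $\langle c,y\rangle\geq\langle c,x\rangle$. Consequently the intersection lies in the hyperplane $H=\{c:\ \langle c,x-y\rangle=0\}$, so $\dim\left(K(x)\cap K(y)\right)\leq d-1$ holds automatically. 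Thus the whole content of the lemma is the equivalence: the intersection attains the full dimension $d-1$ if and only if $[x,y]$ is an edge of $M(X)$, and I would prove the two implications separately.

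For the direction ``edge $\Rightarrow$ common facet'': since $[x,y]$ is a face of the polytope and every face of a polytope is exposed, I would choose $c_{0}$ whose set of maximizers over $X$ is exactly $X\cap[x,y]$, so that $\langle c_{0},z\rangle<\langle c_{0},x\rangle$ for every vertex $z$ off the edge. Then I perturb inside $H$: for $h\in H$ and small $\varepsilon$ the finitely many strict inequalities survive by continuity, while every $z\in[x,y]$ satisfies $\langle h,z-x\rangle=0$, so equality is preserved along the edge itself; hence $c_{0}+\varepsilon h\in K(x)\cap K(y)$. This exhibits a relatively open $(d-1)$-dimensional ball of $H$ inside the intersection, giving $\dim\left(K(x)\cap K(y)\right)\geq d-1$, and equality follows.

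For the converse ``common facet $\Rightarrow$ edge'': knowing $\dim\left(K(x)\cap K(y)\right)=d-1=\dim H$, the intersection is full-dimensional in $H$. For each vertex $z\notin\mathrm{line}(x,y)$ the extra equality $\langle c,z\rangle=\langle c,x\rangle$ cuts out a second hyperplane distinct from $H$, so the set of $c\in K(x)\cap K(y)$ at which such a $z$ is also optimal has dimension at most $d-2$; as there are finitely many such $z$, a suitably generic $c_{0}\in K(x)\cap K(y)$ has its maximizers confined to $X\cap\mathrm{line}(x,y)$. Because $x$ and $y$ are vertices of $M(X)$, no point of $X$ on this line lies strictly outside $[x,y]$, so the optimal face of $c_{0}$ is $\conv\left(X\cap[x,y]\right)=[x,y]$, an edge. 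The step I expect to be the main obstacle is exactly this genericity/dimension-counting argument in the converse — making precise that avoiding finitely many lower-dimensional slices still leaves an admissible $c_{0}$ — together with the bookkeeping when $M(X)$ is not full-dimensional, in which case all cones share the common lineality $(\mathrm{aff}\,X)^{\perp}$; I would note that the dimension counts above already absorb this, so no separate case is needed.
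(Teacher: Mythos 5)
The paper does not actually prove this lemma: it is quoted from the cited monograph \cite{Bondarenko-Maksimenko}, so there is no in-paper argument to compare against. Your proposal is a correct and complete proof by the standard normal-fan duality argument: the containment $K(x)\cap K(y)\subseteq H=\{c:\langle c,x-y\rangle=0\}$ gives the upper bound $d-1$; exposedness of the edge plus a perturbation within $H$ gives the lower bound; and in the converse the dimension count over the finitely many hyperplanes $\{c:\langle c,z-x\rangle=0\}$, $z\notin\mathrm{line}(x,y)$, together with the observation that no point of $X$ can lie on that line outside $[x,y]$ (else $x$ or $y$ would fail to be a vertex), correctly identifies the exposed face as the segment $[x,y]$. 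Your closing remark also rightly notes that the argument survives when $M(X)$ is not full-dimensional, since all cones then share the lineality space orthogonal to the direction space of $\operatorname{aff}X$ and none of the dimension counts change.
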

	
Thus, for the cone decomposition of the space $\R^{d}$ by the set $X$, the graph coincides with 1-skeleton of the polytope $M(X)$. The skeleton of the polytope $M(X)$ (also called $1$-skeleton) is the graph whose vertex set is the vertex set of $M(X)$ (in this case it is $X$) and edge set is the set of 1-faces of $M(X)$.
	
Similarly, we construct the cone decompositions of the positive orthant $\R^{d}_{+}$ for problems on maximum and minimum:
\begin {gather}
K^{+}_{\max}(x) = \{c\in \R^{d}_{+}:\ \left\langle c,x\right\rangle \geq \left\langle c,y\right\rangle,\ \forall y \in X\},
\label{ConeMax}\\
K^{+}_{\min}(x) = \{c\in \R^{d}_{+}:\ \left\langle c,x\right\rangle \leq \left\langle c,y\right\rangle,\ \forall y \in X\}.
\label{ConeMin}
\end {gather}
This construction, in turn, is dual to the polyhedron of the problem that is defined as the dominant of the convex hull of the set $X$:
$$\dmt (X) = \conv (V) + \R^{d}_{+},$$
and is used in the analysis of problems with nonnegative input data \cite {Grotschel}.
In our case it is the nonnegative weights of edges.
	
A large number of papers have been devoted to the study of 1-skeletons, graphs of cone decompositions, and their interrelation with the complexity of combinatorial optimization problems.
In particular, there are results for 1-skeletons of the traveling salesperson problem \cite{Bondarenko-TSP}, spanning tree problems with additional constraints \cite{Bondarenko-Shovgenov}, nonnegative cone decompositions for the shortest and longest path problems \cite{Maksimenko}, maximum and minimum cut problems \cite {Bondarenko-Nikolaev-13, Bondarenko-Nikolaev-16}, and many others \cite {Bondarenko-Maksimenko}.

\section{Balanced complete bipartite subgraph}
	
\begin{theorem}
The WBCBS problem is NP-hard.
\label{balanced-NPhard}
\end{theorem}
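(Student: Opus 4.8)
The plan is to establish NP-hardness of WBCBS by a reduction from the decision version of the balanced complete bipartite subgraph problem (BCBS), which is stated in the introduction to be NP-complete. The key observation is that the decision problem asks whether a biclique $K_{k,k}$ exists as a subgraph of a given (not necessarily complete) bipartite graph, whereas WBCBS is an optimization problem on a \emph{complete} bipartite graph $G=(U,V,E)$ with real edge weights. So the reduction must encode the combinatorial ``edge present / edge absent'' distinction of BCBS into edge weights on the complete host graph.

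\medskip

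First I would take an instance of BCBS: a bipartite graph $H=(U,V,E_H)$ with $|U|=|V|=n$ and a target $k$. I would build the complete bipartite graph $G=(U,V,E)$ on the same vertex sets, and assign weights by the rule $C(\{u,v\})=1$ if $\{u,v\}\in E_H$ and $C(\{u,v\})=0$ otherwise. Then any balanced biclique $G_x=(U_x,V_x,E_x)$ with $|U_x|=|V_x|=k$ contains exactly $k^2$ edges of $G$, and its total weight equals the number of those edges that are present in $H$. Hence the maximum weight over all $k$-balanced bicliques equals $k^2$ if and only if there exist $U_x,V_x$ of size $k$ with \emph{every} pair $\{u,v\}$, $u\in U_x$, $v\in V_x$, an edge of $H$ --- which is exactly the BCBS condition. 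Thus the answer to the BCBS instance is ``yes'' if and only if the optimal WBCBS value equals $k^2$, giving a polynomial-time many-one reduction and establishing NP-hardness.

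\medskip

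The reduction is transparent, so the main obstacle is not the construction itself but being careful about definitions: WBCBS as stated uses weights $C:E\to\R$ (not restricted to nonnegative reals), and is phrased as ``largest (least) sum,'' so I must confirm the $0/1$ weighting is admissible and that the maximization variant is the one reduced to. I expect the proof to be short, and the only point requiring a sentence of justification is the equivalence ``optimal value $=k^2 \iff$ all $k^2$ pairs are edges of $H$,'' which follows immediately because each present edge contributes weight $1$ and there are exactly $k^2$ pairs to cover. One could alternatively cite the NP-completeness of BCBS and simply note that WBCBS with $0/1$ weights generalizes it, but I prefer to make the weight-encoding explicit so that the threshold $k^2$ is manifestly the right certificate value.
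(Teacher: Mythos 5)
Your reduction is exactly the one the paper uses: take the BCBS instance, complete the bipartite graph, assign weight $1$ to original edges and $0$ to the rest, and observe that the optimal $k$-balanced biclique weight equals $k^2$ if and only if the BCBS answer is ``yes.'' The proposal is correct and matches the paper's proof, including the focus on the maximization variant.
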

	
\begin{proof}
We consider only the maximum problem. The proof for the case on minimum is completely similar.	
		
Let $G=(U,V,E)$ be a bipartite graph with $|U|=|V|=n$.
We construct a complete bipartite graph $G^{*} = (U,V,E^{*})$ with the weight function:
\begin{gather*}
c(i,j) =
\begin{cases}
1,& \mbox {if } (i,j) \in E,\\
0,& \mbox {otherwise.}
\end{cases}
\end{gather*}
		
There exists a biclique $x$ such that $|U_{x}| = |V_{x}| = k$ in the graph $G^{*}$ with the sum of edge weights equal to $k^{2}$ (this is the maximum possible weight of a $k$-balanced complete bipartite subgraph) if and only if $x$ is a biclique in the graph $G$. NP-complete problem BCBS is polynomially reduced to WBCBS, respectively, the latter is NP-hard.
\end{proof}
	
\textit {Note:} for the sake of convenience, we use the weights of edges $-1$, $0$, $1$, and $+\infty$.
Since the number of edges in a complete graph is finite, these weights can be replaced by positive integer numbers $\{a_1,a_2,a_3,a_4\}$ such that for any $i$ the weight $a_{i}$ is strictly greater than the sum of weights of all edges $a_{i-1}$.
	
With each feasible solution $x$ of the WBCBS problem, with each $k$-balanced subgraph of $G$, we associate a characteristic vector of the space $\R^{n^{2}}$ by the following rule:
\begin{gather*}
x_{i,j} = 
\begin{cases}
1,& \mbox {if } i \in U(x),\ j \in V(x),\\
0,& \mbox {otherwise.}
\end{cases}
\end{gather*}
	
We denote by $X_{n,k}$ the set of characteristic vectors of all feasible solutions. We consider the polytope of the weighted balanced complete bipartite subgraph problem $WBCBS(n,k) = \conv (X_{n,k})$ and the cone decomposition $K_{n,k}$ of the space $\R^{n^{2}}$ by the set $X_{n,k}$. Let $c \in \R^{n^{2}}$ be the vector of edges weights of the graph $G$, then the sum of edge weights of the subgraph $G_{x}$ is equal to the value of the objective function $\left\langle c,x\right\rangle$.
	
\begin{lemma} \label{balanced-adjacency}
Vertices $x$ and $y$ of the polytope $WBCBS(n,k)$ are adjacent if and only if the corresponding bipartite subgraphs have no common parts:
$$U(x) \neq U(y) \mbox { and } V(x) \neq V(y),$$
or the subgraphs coincide in one part, and in the other part they differ exactly in one vertex:
\begin{gather*}
\left[
\begin{array} {l}
U(x) = U(y),\ |V(x) \backslash V(y)| = 1,\\
V(x) = V(y),\ |U(x) \backslash U(y)| = 1.
\end{array}
\right.
\end{gather*}
\end{lemma}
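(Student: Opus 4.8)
The plan is to use the standard characterization of (non)adjacency on the polytope $M(X)=\conv(X)$: two vertices $x,y$ are non-adjacent if and only if some relative interior point of the segment $[x,y]$ lies in $\conv(X_{n,k}\setminus\{x,y\})$, and adjacent precisely when no such point does. Throughout I write $A_1=U(x)$, $B_1=V(x)$, $A_2=U(y)$, $B_2=V(y)$, and use that the characteristic vector of a feasible solution is the indicator of the rectangle $U(x)\times V(x)$, so $x_{i,j}=1$ exactly on $A_1\times B_1$. For two distinct vertices there are three mutually exclusive possibilities: (1) $A_1\neq A_2$ and $B_1\neq B_2$; (2) exactly one part coincides and the other differs in exactly one vertex; (3) exactly one part coincides and the other differs in at least two vertices. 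The lemma asserts adjacency in (1), (2) and non-adjacency in (3), so I would treat these three cases.

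For the non-adjacency case (3), say $A_1=A_2=A$ while $B_1\neq B_2$ with $|B_1\setminus B_2|\ge 2$, I would \emph{exhibit} two further feasible solutions realizing an interior point. Pick $p\in B_1\setminus B_2$ and $q\in B_2\setminus B_1$ and set $B_1'=(B_1\setminus\{p\})\cup\{q\}$, $B_2'=(B_2\setminus\{q\})\cup\{p\}$. These are again $k$-subsets, the pair $\{B_1',B_2'\}$ differs from $\{B_1,B_2\}$ precisely because $|B_1\setminus B_2|\ge 2$ (so $B_1'\neq B_2$), and as multisets $\mathbf{1}_{B_1'}+\mathbf{1}_{B_2'}=\mathbf{1}_{B_1}+\mathbf{1}_{B_2}$. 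Taking $U$-part $A$ in both gives vertices $x',y'\in X_{n,k}\setminus\{x,y\}$ with $x'+y'=x+y$, so $\tfrac12(x+y)=\tfrac12(x'+y')\in\conv(X_{n,k}\setminus\{x,y\})$ and $x,y$ are non-adjacent.

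For the adjacency cases (1), (2) I would argue by contradiction: suppose an interior point of $[x,y]$ equals $\sum_t \mu_t z^{(t)}$, a convex combination of vertices $z^{(t)}=\mathbf{1}_{A^{(t)}\times B^{(t)}}$ with $z^{(t)}\notin\{x,y\}$ and $\mu_t>0$. The crucial observation is that the zero pattern of \emph{any} interior point of $[x,y]$ is the same, with support $(A_1\times B_1)\cup(A_2\times B_2)$; hence every rectangle $A^{(t)}\times B^{(t)}$ must be contained in this union. In case (1), any row of $A^{(t)}$ then lies in $A_1\cup A_2$ (a row outside would be all-zero inside the nonempty rectangle). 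If $A^{(t)}$ meets $A_1\setminus A_2$, then for every $j\in B^{(t)}$ the entry $(i,j)$ can only belong to $A_1\times B_1$, forcing $B^{(t)}\subseteq B_1$, hence $B^{(t)}=B_1$ and in turn $A^{(t)}=A_1$, i.e. $z^{(t)}=x$; symmetrically meeting $A_2\setminus A_1$ gives $z^{(t)}=y$; meeting neither forces $A^{(t)}\subseteq A_1\cap A_2$, which is too small to have $k$ elements. Each subcase contradicts $z^{(t)}\notin\{x,y\}$, so $x,y$ are adjacent. In case (2), say $A_1=A_2=A$ and $B_1=B\cup\{p\}$, $B_2=B\cup\{q\}$ with $|B|=k-1$; containment in the support forces $A^{(t)}=A$ and $B^{(t)}\subseteq B\cup\{p,q\}$, and in addition every coordinate of $A\times B$ equals $1$ at the interior point, which forces $B\subseteq B^{(t)}$; since $|B^{(t)}|=k$ this leaves only $B^{(t)}\in\{B_1,B_2\}$, again a contradiction.

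I expect the main obstacle to be the adjacency proof in case (1). The instinctive route---designing an edge-defining weight vector---runs into the bilinearity of the objective $\langle c,z\rangle=\mathbf{1}_{A'}^{\mathsf T}C\,\mathbf{1}_{B'}$: additively separable weights $C_{ij}=a_i+b_j$ decouple the choice of $U$- and $V$-parts and therefore cannot single out the two ``diagonal'' optima $(A_1,B_1)$ and $(A_2,B_2)$ without simultaneously making the ``crossed'' pairs $(A_1,B_2)$, $(A_2,B_1)$ optimal as well, so a genuine coupling term seems unavoidable and awkward to control. Passing instead to the convex-combination criterion sidesteps this entirely: the support-containment argument is purely combinatorial and, importantly, independent of which interior point of $[x,y]$ is used, which is what makes it legitimate (a version of the criterion restricted to the midpoint is false in general). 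A minor point to get right is the boundary behaviour when $A_1\cap A_2$ or $B_1\cap B_2$ is empty, but the support argument handles these uniformly.
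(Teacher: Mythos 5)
Your proof is correct, but it travels a genuinely different road from the paper's. You work directly with the polytope, using the convex-combination characterization (non-adjacency iff some relative interior point of $[x,y]$ lies in $\conv(X_{n,k}\setminus\{x,y\})$), whereas the paper invokes its Lemma~1 to pass to the cone decomposition and then certifies adjacency by exhibiting an explicit weight vector $c$ with $\langle c,x\rangle=\langle c,y\rangle>\langle c,z\rangle$ for all other $z$. Your worry that the weight-vector route is blocked by bilinearity is not borne out: the paper simply takes the non-separable weights $c=\mathbf{1}_{(U(x)\times V(x))\cup(U(y)\times V(y))}$ in the ``no common parts'' case (any other biclique then misses $k^2$ by exactly the support-containment argument you give on the primal side), and a $\{1,0,-1\}$ pattern in the ``differ in one vertex'' case, where the $-1$ entries play the role that your ``all coordinates of $A\times B$ equal $1$, hence $B\subseteq B^{(t)}$'' tightness argument plays for you. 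The non-adjacency halves are essentially identical in substance: your swap producing $x'+y'=x+y$ is a particular instance of the paper's pair $z,t$ with $U(z)\cap U(t)=U(x)\cap U(y)$ and $U(z)\cup U(t)=U(x)\cup U(y)$. What each approach buys: yours is self-contained and avoids Lemma~1 entirely, and you are right to insist on the ``some interior point'' form of the criterion rather than the midpoint form; the paper's explicit weight vectors, on the other hand, are reused later --- the subsequent theorem on $K^{max}_{n,k}$ relies on the fact that the adjacency certificate in the no-common-parts case uses only nonnegative $0/1$ weights, so it survives restriction to the positive orthant, something your argument does not directly provide.
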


\begin{proof}
By Lemma \ref{Cones-vertices}, the adjacency of vertices of the polytope $WBCBS(n,k)$ is equivalent to the adjacency of cones of the partition $K_{n,k}$.
We carry out the proof from the point of view of a cone decomposition.
		
Let $x,y \in X_{n,k}$.
Since the cones $K_{n,k}(x)$ and $K_{n,k}(y)$ are adjacent, there is a vector $c \in \R^{n^2}$ that belongs only to these two cones of the cone decomposition $K_{n,k}$ (cones $K_{n,k}(x)$ and $K_{n,k}(y)$ have a common facet)
\begin{equation} \label {adjacency-cones}
\exists c \in \R^{n^2}, \forall z \in X_{n,k} \backslash \{x,y\}: \left\langle c,x\right\rangle = \left\langle c,y\right\rangle > \left\langle c,z\right\rangle.
\end{equation}
		
Let the subgraphs $x$ and $y$ have no common parts. We construct the vector $c$ of edge weights by the following rule (Fig. \ref {Fig_1}):
\begin{gather} 
c_{i,j} =
\begin{cases}
1,& \mbox {if } i \in U(x), j \in V(x) \mbox { or } i \in U(y), j \in V(y), \\ 
0,& \mbox {otherwise.}
\end{cases}
\label {0/1-weight}
\end{gather}
		
\begin{figure}[h]
\centering
\includegraphics[width=2.5in]{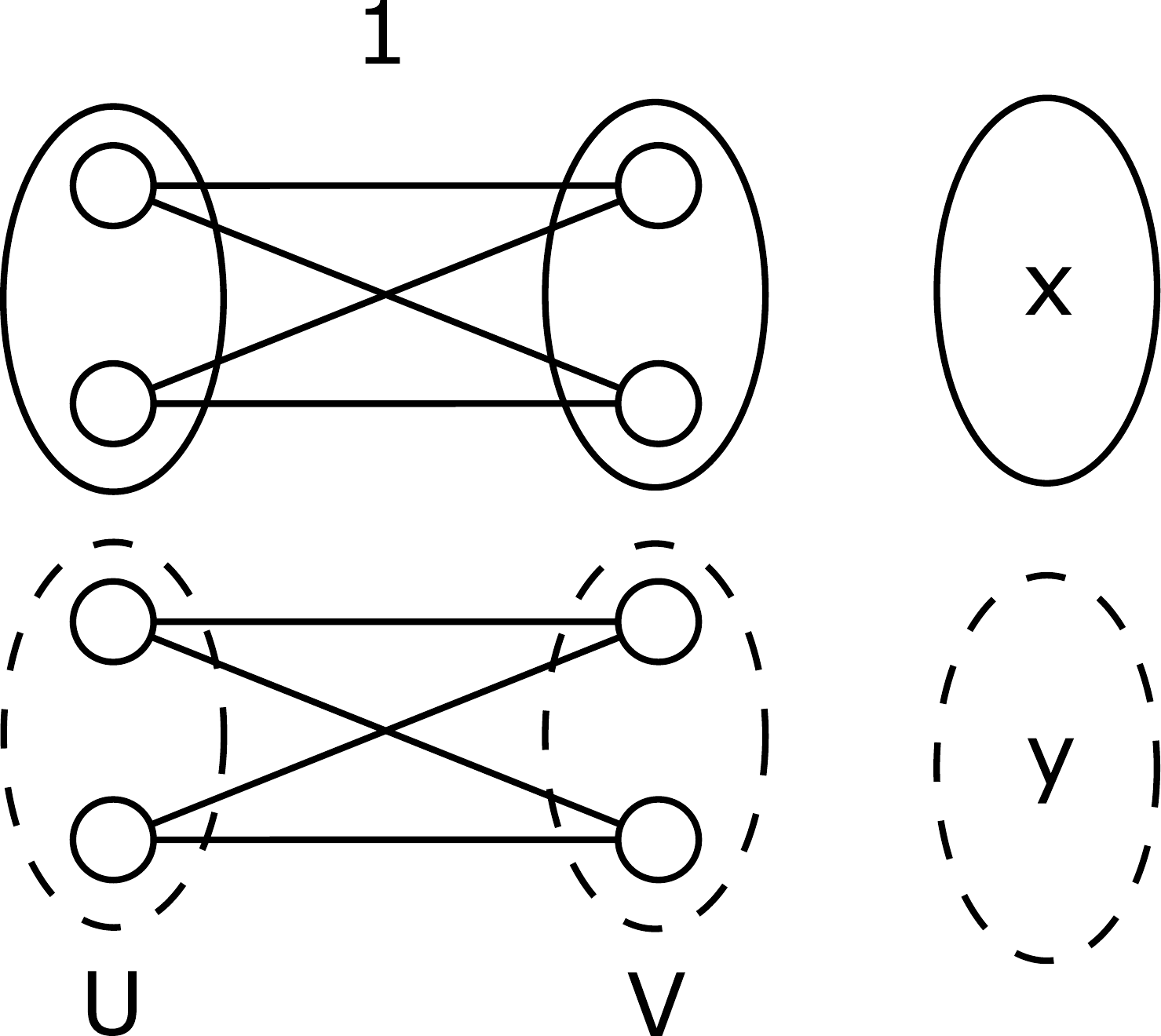}
\caption {The weight function for balanced subgraphs without common parts}
\label{Fig_1}
\end{figure}
		
In this case we obtain
$$\left\langle c,x\right\rangle = \left\langle c,y\right\rangle = k^2,$$
and this is the maximum possible weight of $k$-balanced biclique in the graph.
		
We consider an arbitrary subgraph $z \in X_{n,k} \backslash \{x,y\}$.
\begin {itemize}
\item If $z$ has at least one vertex in the part $U$ that does not belong to $U(x) \cup U(y)$, then $\left\langle c,z\right\rangle < k^{2}$, since all edges incident to this vertex have zero weight.
\item If $z$ has vertices both of $U(x) \backslash U(y)$ and $U(y) \backslash U(x)$ in the part $U$, then in the right part $V$ only vertices of $V(x) \cap V(y) $ have nonzero edges with them at the same time. Since $|V (x) \cap V(y)| < k$, at least one edge has zero weight, hence $\left\langle c,z\right\rangle < k^{2}$.
\item If $U(z) = U(x)$ (or $U(z) = U(y)$), then we can similarly consider the part $V$ and show that since $z \neq x$ ($z \neq y$, respectively), we have $\left\langle c,z\right\rangle < k^{2}$.
\end {itemize}
		
Thus, the cones $K_{n,k} (x)$ and $K_{n,k}(y)$ are adjacent by condition (\ref {adjacency-cones}).
		
Let the subgraphs $x$ and $y$ have a common part. 
Without loss of generality, we assume that $V(x) = V(y)$.
Let $|U(x) \backslash U(y)| = 1$.
We construct the vector $c$ of edge weights by the following rule (Fig. \ref{Fig_2}):
\begin{gather*}
c_{i,j} = 
\begin{cases}
1,& \mbox {if } i \in U(x) \cap U(y),\ j \in V(x) = V(y), \\ 
0,& \mbox {if } i \in U(x) \triangle U(y),\ j \in V(x) = V(y), \\
-1,& \mbox {otherwise.}
\end{cases}
\end{gather*}
		
\begin{figure}[h]
\centering
\includegraphics[width=2.5in]{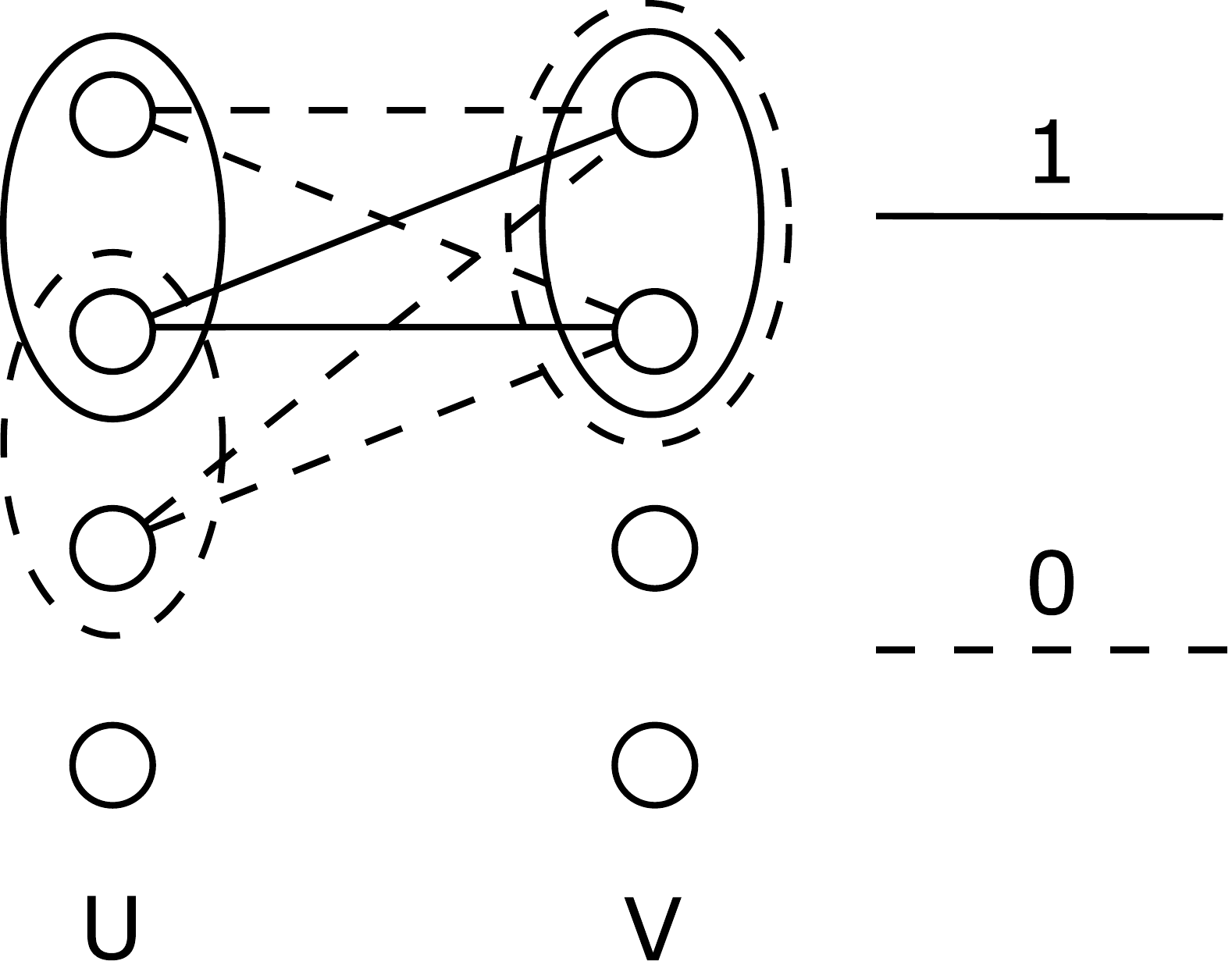}
\caption {The weight function for balanced subgraphs with a common part}
\label {Fig_2}
\end{figure}
		
By construction, we obtain
$$\left\langle c,x\right\rangle = \left\langle c,y\right\rangle = k(k-1).$$
		
We consider an arbitrary subgraph $z \in X_{n,k} \backslash \{x,y\}$.
\begin {itemize}
\item If $z$ has at least one vertex in the part $U$ that does not belong to $U(x) \cup U(y)$, then all edges incident to this vertex have negative weights, hence 
$$\left\langle c,z\right\rangle \leq k(k-1) - k < k(k-1).$$
\item If $z$ has both vertices of $U(x) \triangle U(y)$, then the edges incident to them have zero or negative weight, hence
$$\left\langle c,z\right\rangle \leq k(k-2) < k(k-1).$$
\end {itemize}
Consequently, the cones $K_{n,k} (x)$ and $K_{n,k}(y)$ are adjacent.
		
It remains to consider the case $|U(x) \backslash U(y)| \geq 2$.
We denote by
$$a = |U(x) \cap U(y)| \leq k-2.$$
The symmetric difference $U(x) \triangle U(y)$ contains $2(k-a)$ vertices.
We can choose $k-a$ vertices from them in 
$${2(k-a) \choose k-a} \geq 6$$
different ways. Therefore, there exist subgraphs $z,t \in X_{n,k} \backslash \{x,y\}$ such that
\begin{gather*}
V(z) = V(t) = V(x) = V(y),\\
U(z) \cap U(t) = U(x) \cap U(y),\\
U(z) \cup U(t) = U(x) \cup U(y).
\end{gather*}
Thus,
$$\left\langle c,x\right\rangle + \left\langle c,y\right\rangle = \left\langle c,x \cup y\right\rangle + \left\langle c,x \cap y\right\rangle = \left\langle c,z \cup t\right\rangle + \left\langle c,z \cap t\right\rangle = \left\langle c,z\right\rangle + \left\langle c,t\right\rangle,$$ 
and at least one of the subgraphs $z$ or $t$ has a weight as large as $x$ and $y$.
The cones $K_{n,k} (x)$ and $K_{n,k}(y)$ are not adjacent.
\end{proof}
	
\begin {theorem} \label {balanced-clique}
The clique number of 1-skeleton of the polytope of the weighted balanced complete bipartite subgraph problem $WBCBS(n,k)$ is superpolynomial in $n$ and $k$:
$$\omega (WBCBS(n,k)) \geq {n \choose k} = \Omega \left(\left(\frac{n}{k}\right)^{k}\right).$$
\end {theorem}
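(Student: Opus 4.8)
The plan is to exhibit an explicit clique of size $\binom{n}{k}$ in the 1-skeleton of $WBCBS(n,k)$, exploiting the first case of the adjacency criterion from Lemma \ref{balanced-adjacency}: two vertices are adjacent whenever $U(x) \neq U(y)$ and $V(x) \neq V(y)$. The crucial observation is that this case asks only for the two parts to be \emph{distinct} as subsets, not disjoint. Consequently, any family of bicliques whose left parts are pairwise distinct and whose right parts are pairwise distinct is automatically a clique, and the problem reduces to producing as many such bicliques as possible.

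To realize this, I would enumerate the $k$-element subsets of $U$ as $U_1, \dots, U_m$ and the $k$-element subsets of $V$ as $V_1, \dots, V_m$, where $m = \binom{n}{k}$, and fix an arbitrary bijection pairing them up (both sides carry exactly $\binom{n}{k}$ such subsets because $|U| = |V| = n$). For each index $i$ I form the biclique $x_i$ with left part $U(x_i) = U_i$ and right part $V(x_i) = V_i$. Since $G$ is complete bipartite, every such pair determines a feasible $k$-balanced subgraph, so $x_i \in X_{n,k}$.

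It then remains to verify pairwise adjacency and to read off the estimate. For any $i \neq j$ the left parts satisfy $U_i \neq U_j$ and the right parts satisfy $V_i \neq V_j$, because distinct indices correspond to distinct $k$-subsets on each side; hence each pair $x_i, x_j$ falls into the first case of Lemma \ref{balanced-adjacency}, the cones $K_{n,k}(x_i)$ and $K_{n,k}(x_j)$ share a facet, and the vertices are adjacent. Thus $\{x_1, \dots, x_m\}$ is a clique of cardinality $\binom{n}{k}$, giving $\omega(WBCBS(n,k)) \geq \binom{n}{k}$. The asymptotic form follows from the standard termwise bound $\binom{n}{k} = \prod_{i=0}^{k-1} \frac{n-i}{k-i} \geq \left(\frac{n}{k}\right)^{k}$, using $\frac{n-i}{k-i} \geq \frac{n}{k}$ for $0 \leq i < k$.

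Because the construction is direct, I expect no substantial obstacle. The single point requiring care is the correct reading of Lemma \ref{balanced-adjacency}: adjacency arises from mere distinctness of both parts rather than from disjointness, and it is precisely this weaker requirement that allows a clique of the full size $\binom{n}{k}$ to be attained.
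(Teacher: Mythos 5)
Your proof is correct and takes essentially the same route as the paper: the paper builds the same size-$\binom{n}{k}$ clique by pairing each $k$-subset of $U$ with the identically numbered $k$-subset of $V$ (a particular instance of your arbitrary bijection) and invoking the first case of Lemma \ref{balanced-adjacency}. Your reading of that case --- requiring only distinctness of both parts, not disjointness --- is the intended one, as the paper's own proof of the lemma confirms.
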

	
\begin {proof}
We consider a subset of $k$-balanced bipartite subgraphs $Y_{n,k} \subset X_{n,k}$ of the following form: we number all vertices in each part by numbers from $1$ to $n$ and consider only subgraphs whose vertex numbers on the left and right sides coincide.
Any two subgraphs $x,y \in Y_{n,k}$ have no common parts. Hence, by Lemma \ref{balanced-adjacency}, the corresponding vertices of the polytope $WBCBS(n,k)$ are adjacent.
Thus, $Y_{n,k}$ forms a clique in 1-skeleton of the polytope of the problem:
$$|Y_{n,k}| = {n \choose k}.$$
The asymptotic lower estimate is standard for the number of combinations.
\end {proof}

\section {Maximum complete bipartite subgraph}
	
Now we consider the unbalanced case.
	
\begin {theorem}
The maxWCBS problem is NP-hard.
\end {theorem}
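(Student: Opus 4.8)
The plan is to reduce the NP-complete problem BCBS to maxWCBS, essentially reusing the weight construction from the proof of Theorem~\ref{balanced-NPhard} but now exploiting the cardinality constraint $|U_x| + |V_x| = k$ to enforce balance, rather than imposing balance directly.

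First I would take an arbitrary instance of BCBS: a bipartite graph $G = (U, V, E)$ with $|U| = |V| = n$ and an integer $k$, asking whether $G$ contains a biclique with $|U_x| = |V_x| = k$. From it I would build a complete bipartite graph $G^{*} = (U, V, E^{*})$ with the nonnegative weight function $c(i,j) = 1$ if $(i,j) \in E$ and $c(i,j) = 0$ otherwise, and pose the maxWCBS instance on $G^{*}$ with cardinality parameter $2k$. The heart of the argument is the claim that the optimum of this instance equals $k^{2}$ if and only if $G$ has a $k$-balanced biclique. Indeed, for any feasible biclique of $G^{*}$ with part sizes $a = |U_x|$ and $b = |V_x|$, where $a + b = 2k$, its total weight is at most its edge count $ab$, and by the arithmetic--geometric mean inequality $ab \le k^{2}$ because $a + b = 2k$, with equality forcing $a = b = k$. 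Hence the value $k^{2}$ can be attained only by a balanced biclique all of whose $k^{2}$ edges lie in $E$, that is, by a genuine $k$-balanced biclique of $G$; conversely, any such biclique yields weight exactly $k^{2}$. Since the reduction is plainly polynomial and BCBS is NP-complete, maxWCBS is NP-hard.

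The step I expect to be the crux is ruling out unbalanced optimal solutions: unlike in Theorem~\ref{balanced-NPhard}, the split between the two sides is now free, so balance is not given but must be deduced. This is exactly what the bound $ab \le k^{2}$ accomplishes --- because every present edge carries the same weight, moving vertices to one side strictly decreases the number of edges and hence the objective, so the unique way to reach $k^{2}$ is a balanced subgraph using only edges of $E$. The $\{0,1\}$ weights keep all data nonnegative as required by the definition of maxWCBS, and if strictly positive integer weights are preferred they may be rescaled exactly as in the note following Theorem~\ref{balanced-NPhard}.
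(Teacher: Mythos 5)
Your proposal is correct and follows essentially the same reduction as the paper: a 0/1-weighted complete bipartite graph with the maxWCBS cardinality parameter set to $2k$, where achieving weight $k^{2}$ forces a $k$-balanced biclique of $G$. You merely spell out the justification (via $ab \le k^{2}$ for $a+b=2k$) that the paper leaves implicit in its one-line claim.
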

	
\begin {proof}
We use the argument for Theorem \ref{balanced-NPhard}.
Let $G$ be a bipartite graph. 
We construct a complete weighted bipartite graph $G^{*}$ whose edges have weight $1$ if they belong to the graph $G$ and $0$ otherwise.
In the graph $G^{*}$ there is a biclique $x$ of weight $k^{2}$ on $2k$ vertices if and only if $x$ is a $k$-balanced biclique in $G$.
Thus, there is a polynomial reduction of BCBS to maxWCBS.
\end {proof}
	
Similar to balanced case, with each feasible solution of maxWCBS we associate a characteristic vector $x \in \R^{n^{2}}_{+}$.
Let $X^{u}_{n,k}$ be the set of characteristic vectors of all feasible solutions.
We consider the nonnegative cone decomposition $K^{max}_{n,k}$ of the positive orthant $\R^{n^{2}}_{+}$ by the set $X^{u}_{n,k}$ for the maximum problem (\ref {ConeMax}).
	
\begin{theorem}
Clique number of the graph of the cone decomposition $K^{max}_{n,k}$ for the maximum weighted complete bipartite subgraph problem is superpolynomial in $n$ and $k$:
\begin{gather*}
\omega (K^{max}_{n,k}) \geq {n \choose s} = \Omega \left(\left(\frac{n}{s}\right)^{s}\right), \mbox { if } k = 2s,\\
\omega (K^{max}_{n,k}) \geq {n-1 \choose s} = \Omega \left(\left(\frac{n-1}{s}\right)^{s}\right), \mbox { if } k = 2s+1.
\end{gather*}
\end{theorem}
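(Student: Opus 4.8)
The plan is to bound $\omega(K^{max}_{n,k})$ from below by exhibiting an explicit clique in the graph of the nonnegative cone decomposition, mirroring Theorem~\ref{balanced-clique}. The vertices of this graph are the cones $K^{+}_{\max}(x)$, and two are adjacent when they share a facet, so it suffices to produce a family $Y\subseteq X^{u}_{n,k}$ such that for every pair $x,y\in Y$ there is a weight $c\in\R^{n^{2}}_{+}$ with $\langle c,x\rangle=\langle c,y\rangle>\langle c,z\rangle$ for all $z\in X^{u}_{n,k}\setminus\{x,y\}$. If $c$ can moreover be chosen in the interior of the orthant (all coordinates positive), then perturbing $c$ inside the hyperplane $\langle\,\cdot\,,x-y\rangle=0$ preserves both nonnegativity and all strict inequalities, so $\dim\bigl(K^{+}_{\max}(x)\cap K^{+}_{\max}(y)\bigr)=n^{2}-1$ and the cones are adjacent. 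Thus the task reduces to finding a family that is pairwise separable by strictly positive weights, whence $\omega(K^{max}_{n,k})\ge|Y|$.

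\textbf{Even case ($k=2s$).} Number the vertices of each part $1,\dots,n$ and let $Y$ consist of the index-matched balanced bicliques $x_{S}$ with $U(x_{S})=V(x_{S})=S$, $|S|=s$, so $|Y|=\binom{n}{s}$. For a pair $x_{S},x_{T}$ I would take the weight equal to $1$ on the edges of $x_{S}$ and of $x_{T}$ and to a small $\epsilon>0$ elsewhere, so that $\langle c,x_{S}\rangle=\langle c,x_{T}\rangle=s^{2}$. Writing $e_{H}(A,B)$ for the number of weight-one edges inside a candidate biclique with parts $A,B$, the core estimate is
$$e_{H}(A,B)\ \le\ |A\cap(S\cup T)|\cdot|B\cap(S\cup T)|\ \le\ \left(\tfrac{|A|+|B|}{2}\right)^{2}=s^{2},$$
where the first inequality follows from the identity that the difference between the product and $e_{H}$ equals a sum of two nonnegative cross-terms (the contributions of $A\cap(S\setminus T)$ paired against $B\cap(T\setminus S)$, and of $A\cap(T\setminus S)$ against $B\cap(S\setminus T)$), and the second is the arithmetic--geometric mean inequality together with $|A|+|B|=2s$. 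Tracing the equality conditions shows that $e_{H}=s^{2}$ forces $A,B\subseteq S\cup T$, $|A|=|B|=s$, and both cross-terms to vanish, which pins $A,B\in\{S,T\}$ with matching parts, i.e. $z\in\{x_{S},x_{T}\}$. The $\epsilon$-background makes $c$ strictly positive without spoiling the strict domination, so every pair is adjacent and $Y$ is a clique.

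\textbf{Odd case ($k=2s+1$).} Here no biclique is balanced, so I would fix one special left vertex $w$ and take $Y=\{x_{S}\}$ with $U(x_{S})=\{w\}\cup S$, $V(x_{S})=S$ over all $S\subseteq\{1,\dots,n-1\}$ with $|S|=s$, giving $|Y|=\binom{n-1}{s}$ and common value $\langle c,x_{S}\rangle=s(s+1)$. The same estimate yields $e_{H}(A,B)\le s(s+1)$, but uniqueness of the maximizers breaks in exactly one situation: when $|S\cap T|=s-1$ the transposed biclique $z$ with parts $\{w\}\cup(S\cap T)$ and $S\cup T$ also attains $s(s+1)$ and ties with $x_{S},x_{T}$ under the plain $0/1$ weight. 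The fix I would use is to add a small bonus $\gamma$ to the two diagonal edges private to $x_{S}$ and to $x_{T}$, namely $(\sigma,\sigma)$ and $(\theta,\theta)$ for chosen $\sigma\in S\setminus T$, $\theta\in T\setminus S$; these lie in $x_{S}$ resp. $x_{T}$ but not in the transposed competitor, lifting $x_{S},x_{T}$ to $s(s+1)+\gamma$ while $z$ remains at $s(s+1)$.

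\textbf{Main obstacle.} The hard part is the global optimization behind this last step: one must prove that, over all feasible bicliques on $k$ vertices, the only maximizers of $e_{H}$ are $x_{S}$, $x_{T}$, and (only when $|S\cap T|=s-1$) the single transposed biclique $z$, and then verify that the bonus $\gamma<1$ isolates $x_{S}$ and $x_{T}$ as the unique optima without producing new ties --- in particular that no maximizer carries both bonuses, and that solutions with $e_{H}\le s(s+1)-1$ cannot catch up. Carrying out this characterization (via the equality analysis of the two displayed inequalities, split according to whether the candidate is left-heavy or right-heavy) is the delicate step; once it is done, an $\epsilon$-background again makes the weight strictly positive, the facet-dimension argument applies verbatim, and $Y$ is a clique. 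The stated asymptotics $\binom{n}{s}=\Omega\bigl((n/s)^{s}\bigr)$ and $\binom{n-1}{s}=\Omega\bigl(((n-1)/s)^{s}\bigr)$ are the standard estimates for binomial coefficients.
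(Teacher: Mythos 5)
Your construction is the same as the paper's: the clique is the family of index-matched bicliques (with one pinned extra vertex when $k$ is odd), and each pair is certified by the $0/1$ weighting supported on the edges of $x_{S}$ and $x_{T}$. Your even case is complete, and in fact more careful than the paper's own argument: the paper simply invokes Lemma~\ref{balanced-adjacency}, whose proof only rules out \emph{balanced} competitors, whereas the feasible set $X^{u}_{n,k}$ of maxWCBS also contains unbalanced bicliques with $|A|+|B|=k$; your AM--GM step $e_{H}(A,B)\le|A|\,|B|\le s^{2}$ disposes of those, and the $\epsilon$-background correctly addresses the requirement that the certificate for condition (\ref{adjacency-cones}) be producible in the interior of the positive orthant.

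The gap is the one you flag yourself in the odd case, and it is a genuine one. You are right that when $|S\cap T|=s-1$ the transposed biclique $z$ with parts $\{w\}\cup(S\cap T)$ and $S\cup T$ ties with $x_{S}$ and $x_{T}$ at $s(s+1)$ under the plain $0/1$ weight --- a tie the paper's own proof silently passes over, so the strict inequality in (\ref{adjacency-cones}) is not actually verified there. Your $\gamma$-bonus on the private diagonal edges is the right repair, but the proposal stops at announcing that one ``must prove'' the maximizers of $e_{H}$ are exactly $x_{S}$, $x_{T}$ and $z$, and that no competitor collects both bonuses; as written, the bound $\binom{n-1}{s}$ is therefore not established. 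The missing step does follow from the two inequalities you already set up: a maximizer whose left part has $s$ vertices and meets both $S\setminus T$ and $T\setminus S$ would force its right part into $(S\cap T)\cup\{\cdot\}$, which is too small, so such a maximizer is $x_{S}$ or $x_{T}$; a maximizer with $s+1$ vertices on the transposed side is forced to be $z$, and only when $|S\cap T|=s-1$; and any subgraph containing both bonus edges has both cross-terms positive, hence $e_{H}\le s(s+1)-2$, so any $0<\gamma<1$ leaves $x_{S}$ and $x_{T}$ as the unique optima. Writing out this equality analysis is what separates your sketch from a proof.
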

	
\begin{proof}
If $k$ is even, we can use the construction from the proof of Theorem \ref{balanced-clique} where the vertices with the same numbers are taken from the left and right parts.
The nonnegative cones of $s$-balanced bicliques without common parts will be pairwise adjacent by Lemma \ref{balanced-adjacency}.
Note that in the proof of this part of the adjacency criterion only nonnegative 0/1-weights were used (Fig. \ref{Fig_1}).
		
The case with odd $k$ is treated similarly.
We fix one vertex in the left and right parts. Let it be the vertex with the number $n$.
We consider the set $Y^{max}_{n,k}$ of bipartite subgraphs of the following form:
\begin{gather*}
|U_{x}| = s,\ U_{x} \subseteq \{1,2,3,\ldots n-1\},\\
|V(x)| = s+1,\ \{n\} \in V(x),
\end{gather*} 
that contain the vertex with the number $n$ in the right part.
		
For any two subgraphs $x$ and $y$ of $Y^{max}_{n,k}$, we can construct a vector $c$ of edge weights by the rule (\ref{0/1-weight}).
The subgraphs $x$ and $y$ have the greatest possible weight for a biclique on $2s + 1$ vertices:
$$\left\langle c,x\right\rangle = \left\langle c,y\right\rangle = s(s+1).$$

Any bipartite subgraph $z$ that includes vertices other than $U (x) \cup U (y) $, or both vertices of $U (x) \backslash U (y)$ and $ U (y) \backslash U (x) $ will have a sum of weights less than this.
Therefore, by the condition (\ref {adjacency-cones}), the cones $K^{max}_{n,k} (x)$ and $K^{max}_{n,k} (y)$ are adjacent, and $Y^{max}_{n,k}$ forms a clique in the graph of the cone decomposition.
If we exclude a vertex with number $n$, the case of odd $k$ reduces to an even case for the graph on $n-1$ vertices in each part.
\end {proof}
		
We note that all the results and proofs for the maximum complete bipartite subgraph problem are similar to the balanced case.
This is expected, given that to maximize the number of edges (edge weights) the required subgraph should be as close to a balanced one as possible.
For the minimum problem, it will be different.

\section {Minimum complete bipartite subgraph}

\begin {theorem}
The minWCBS problem is NP-hard.
\end {theorem}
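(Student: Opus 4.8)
The plan is to reduce from an NP-complete problem in the spirit of Theorem~\ref{balanced-NPhard}, but the naive construction used there fails here and must be repaired. As before, given a bipartite graph $G=(U,V,E)$ I would build a complete weighted graph $G^{*}$ carrying weight $0$ on the edges of $G$ and a large weight on the missing edges, hoping that a minimum-weight biclique on $2k$ vertices of weight $0$ certifies a $k$-balanced biclique in $G$. The difficulty, anticipated in the remark preceding this section, is that with nonnegative weights the minimum is driven toward the \emph{most unbalanced} biclique: a single vertex of $U$ together with $2k-1$ of its neighbours already yields a biclique of weight $0$ on $2k$ vertices, and by K\"{o}nig's theorem the existence of such an unbalanced biclique can be tested in polynomial time. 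Hence a zero-weight threshold detects only \emph{some} biclique of total size $2k$, not a balanced one, and the reduction as stated collapses.

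First I would therefore design a \emph{balance-forcing gadget}. The target is a weight function for which every biclique on $2k$ vertices whose weight lies below a chosen threshold is necessarily balanced, with $|U_{x}|=|V_{x}|=k$, and which moreover restricts the admissible vertices to an embedded copy of the hard instance. Concretely I would embed the given instance into a larger $n\times n$ matrix so that the ``cheap'' entries occupy a controlled region while every entry outside it carries a weight so large that a single occurrence already exceeds the threshold; then any admissible choice of $U_{x},V_{x}$ must avoid the expensive region. The role of the gadget is to shape this cheap region so that no unbalanced split of total size $2k$ fits inside it, which forces $|U_{x}|=|V_{x}|=k$. With balance forced, the remaining step is routine: on the balanced part the objective reduces exactly to the weighted balanced problem, so I would invoke the NP-hardness of WBCBS established in Theorem~\ref{balanced-NPhard} (equivalently, reduce from BCBS directly) to conclude.

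The main obstacle is precisely the construction and verification of this gadget. The natural quantity that would do the job, a penalty proportional to $(|U_{x}|-|V_{x}|)^{2}$, is not expressible as a sum of nonnegative edge weights over $U_{x}\times V_{x}$: such a sum is a bilinear form in the two indicator vectors and can realise $|U_{x}|\,|V_{x}|$, which is \emph{maximised} at balance and therefore pushes in the wrong direction, but it cannot produce the square of the imbalance. Overcoming this—blocking unbalanced selections through the placement of large weights rather than through an explicit penalty term, and then checking that below the threshold only balanced biclique solutions of the source instance survive—is the technical heart of the argument and the exact point at which the minimum problem departs from the balanced and maximum cases.
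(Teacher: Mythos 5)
Your proposal is not a proof: it correctly diagnoses why the naive weighting from Theorem~\ref{balanced-NPhard} collapses for the minimum problem (a single high-degree vertex of $U$ together with $2k-1$ of its neighbours already realises weight zero, so a cheap biclique need not be balanced), but the entire argument then rests on a ``balance-forcing gadget'' that you never construct. You yourself flag its construction as the main obstacle and explain why the natural penalty $(|U_{x}|-|V_{x}|)^{2}$ cannot be encoded as a nonnegative bilinear form over $U_{x}\times V_{x}$; what remains is a statement of intent, not an argument. As written, there is no reduction, no source problem actually mapped, and no verification, so NP-hardness is not established.

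The paper avoids this obstacle entirely by not attempting to force balance. It passes to the complement: keeping $k$ vertices is the same as deleting $q=2n-k$ vertices, and minimising the weight of the retained biclique is the same as maximising the total weight of edges incident to the deleted set. Using only two edge weights, $1$ and $n^{2}$, the heavy edges dominate any combination of light ones, so the optimal deletion is exactly a maximum $q$-vertex cover in the bipartite graph formed by the heavy edges. That problem is NP-hard on bipartite graphs \cite{Apollonio, Joret}, which finishes the reduction. Your instinct that the minimum problem genuinely departs from the balanced and maximum cases is sound --- the paper's conclusion makes the same point --- but the lesson of its proof is that the way out is to change the source problem (to maximum $q$-vertex cover via complementation), not to repair the BCBS reduction with a gadget that, as you observe, the weight structure cannot express.
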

		
\begin {proof}
We consider a bipartite graph $G$ with edges of two possible weights: $1$ and $n^2$.
Suppose that instead of selecting vertices in a subgraph, we exclude vertices and all edges incident to them from the complete graph.
In this case, the minWCBS problem of finding a bipartite subgraph on $k$ vertices of minimum weight is equivalent to the problem of finding a subset of $2n-k$ vertices with a maximum number of incident edges of weight $n^2$ for elimination.
Note that the total contribution of all edges of weight $1$ is less than the contribution of one edge of weight $n^2$.
We construct an unweighted bipartite graph $G^{*}$ by removing all edges of unit weight from $G$.
The described problem is the maximum $q$-vertex cover problem on graph $G^{*}$ for $q = 2n-k$.
		
\textbf{Maximum $q$-vertex cover.}
Given a graph  $G = (V,E)$ and a positive integer $q < |V|$.
It is required to find a subset of vertices $U \subset V$ with the greatest number of incident edges such that $|U| = q$.
		
The maximum $q$-vertex cover problem is NP-hard even restricted to bipartite graphs \cite {Apollonio, Joret}.
\end {proof}
		
We consider the cone decomposition $K^{min}_{n,k}$ of the space $\R^{n^{2}}_{+}$ by $X^{u}_{n,k}$ where $X^{u}_{n,k}$ is the set of characteristic vectors of all feasible solutions of the minWCBS problem (\ref {ConeMin}).
Denote by:
$$m = \left\lfloor\frac{n}{2} \right\rfloor.$$
		
\begin {theorem}
Let
\begin {equation} \label {kIneq}
k = 3s \mbox { and  } \frac {9}{4} m < k < 3m,
\end {equation}
then the clique number of the graph of the cone decomposition $K^{min}_{n,k}$ for the minimum weighted complete bipartite subgraph problem is superpolynomial in $n$ and $k$:
$$\omega (K^{min}_{n,k}) \geq {m \choose s} = \Omega \left(\left\lfloor \frac{3n}{2k}\right\rfloor^{\frac{k}{3}}\right).$$
\end {theorem}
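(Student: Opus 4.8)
The plan is to exhibit an explicit clique of size ${m \choose s}$ in the graph of the cone decomposition $K^{min}_{n,k}$ by a block (diagonal) construction, in analogy with the way Theorem \ref{balanced-clique} produced a clique from mutually non-coinciding subgraphs. First I would reserve $m$ left vertices $u_1,\dots,u_m$ and $2m$ right vertices $v_1^1,v_1^2,\dots,v_m^1,v_m^2$, which is possible since $m\le n$ and $2m\le n$ by definition of $m=\lfloor n/2\rfloor$. To every $s$-subset $S\subseteq\{1,\dots,m\}$ I associate the biclique $x_S$ with $U(x_S)=\{u_i:i\in S\}$ and $V(x_S)=\{v_i^1,v_i^2:i\in S\}$, so that $|U(x_S)|+|V(x_S)|=s+2s=3s=k$ and $x_S$ is feasible. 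There are ${m \choose s}$ such subgraphs, and I claim they are pairwise adjacent in $K^{min}_{n,k}$; their cones then pairwise share a facet and the bound follows.

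The decisive consequence of the hypothesis (\ref{kIneq}), equivalently $s<m<2s$, is that any two $s$-subsets $S\ne S'$ of $\{1,\dots,m\}$ satisfy $|S\cap S'|\ge 2s-m>0$ and $|S\cup S'|\le m$; thus the blocks used by $x_S$ and $x_{S'}$ stay within the reserved vertices, while the two subgraphs always differ in both parts. I would verify adjacency pair by pair through the minimum analogue of (\ref{adjacency-cones}) for the cone (\ref{ConeMin}): for fixed $S,S'$ I must produce $c\in\R^{n^2}_+$ with $\langle c,x_S\rangle=\langle c,x_{S'}\rangle<\langle c,z\rangle$ for every other feasible $z$. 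I would take $c$ with three levels: a large weight (as in the Note after Theorem \ref{balanced-NPhard}) on every edge touching a block outside $S\cup S'$, weight $0$ on edges $(u_a,v_b^t)$ whose block indices both lie in $S$ or both lie in $S'$, and weight $1$ on the remaining ``mixed'' edges inside $S\cup S'$. This confines any cheap solution to the blocks of $S\cup S'$ and makes $\langle c,x_S\rangle=\langle c,x_{S'}\rangle$ the common minimal value.

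It remains to show no third subgraph attains this value, and this is the step I expect to be the main obstacle. A competitor that again uses $s$ left and $2s$ right vertices costs $0$ only if every pair of its block indices lies together in $S$ or in $S'$; being unable to mix an index of $S\setminus S'$ with one of $S'\setminus S$, it is contained in $S$ or in $S'$, hence equals $x_S$ or $x_{S'}$. The genuinely dangerous competitors are the \emph{rebalanced} ones, taking fewer rows inside $S\cap S'$ and more columns; here I would use $s<m<2s$ quantitatively. Since only $m$ blocks are available, the column count is capped, and on the resulting range the submatrix weight at the split $(s,2s)$ beats any more extreme split by an inequality of the form $2s^2<(3s-m)m\iff(2s-m)(s-m)<0$, which is exactly (\ref{kIneq}); a small perturbation, uniform inside $S\cup S'$ and minimized precisely at the split $(s,2s)$, then makes every rebalanced (and every partial-block) subgraph strictly more expensive while leaving $x_S$ and $x_{S'}$ tied. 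Once both competitor types are excluded, the cones $K^{min}_{n,k}(x_S)$ and $K^{min}_{n,k}(x_{S'})$ share a facet, the family $\{x_S\}$ is a clique, and $\omega(K^{min}_{n,k})\ge {m \choose s}$; the asymptotic estimate is the standard lower bound for binomial coefficients with $s=k/3$ and $m=\lfloor n/2\rfloor$.
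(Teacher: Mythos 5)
Your clique is exactly the paper's family $Y^{min}_{n,k}$: one left vertex and two right vertices per block index, one subgraph per $s$-subset of $\{1,\dots,m\}$, giving ${m\choose s}$ candidates. The gap is in the adjacency certificate. With your three-level weights, $\langle c,x_S\rangle=\langle c,x_{S'}\rangle=0$, but so is the base weight of \emph{every} biclique whose rows lie in the $S\cap S'$ blocks and whose columns lie anywhere in the $S\cup S'$ blocks, since each such edge has both block indices in $S$ or both in $S'$. Writing $a=|S\cap S'|$, the hypothesis (\ref{kIneq}) gives $a\ge 2s-m>s/2$, so the subgraph $z$ with $|U(z)|=2a-s\ge 1$ rows from $S\cap S'$ and all $4s-2a$ right vertices of the $S\cup S'$ blocks is feasible ($(2a-s)+(4s-2a)=3s=k$) and has base weight $0$. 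Your uniform perturbation then compares areas: $x_S$ costs $\epsilon\cdot 2s^2$, while $z$ costs $\epsilon\,(2a-s)(4s-2a)$, and $(2a-s)(2s-a)<s^2$ because $s^2-(2a-s)(2s-a)=(3s-2a)(s-a)>0$ for $a<s$. So $z$ is \emph{strictly cheaper} than $x_S$ and $x_{S'}$, and your vector $c$ does not witness a common facet. The inequality $2s^2<(3s-m)m$ that you invoke only rules out the split with \emph{more} rows and fewer columns, whereas the dangerous competitors --- which you yourself identify as those with fewer rows and more columns --- sit at the other end of the range; since $b\mapsto b(3s-b)$ is concave, a uniform per-edge perturbation is minimized at an extreme split, never at $(s,2s)$, so no choice of $\epsilon$ repairs this.

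The paper escapes this trap with an asymmetric weight function: cross edges between $U(x)\setminus U(y)$ and $V(y)\setminus V(x)$ (and symmetrically) get weight $\infty$ rather than $1$, edges from $U(x)\cap U(y)$ to $V(x)\triangle V(y)$ get weight $1$ rather than $0$, and only the remaining edges of the two subgraphs get weight $0$. Then $x$ and $y$ themselves pay $2a(s-a)>0$, a competitor with $U(z)\subseteq U(x)\cap U(y)$ and $|U(z)|=b$ pays at least $b(3s-b-2a)$, and the hypothesis $\tfrac34 m<s<m$ enters exactly to show $b(3s-b-2a)>2a(s-a)$ over the feasible range $3s-2m\le b\le a$. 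You would need to replace your certificate with one of this kind; the construction of the clique itself is correct, but as written the adjacency proof fails.
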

		
\begin {proof}
Without loss of generality, we assume that $n = 2m$.
Otherwise, we can choose a vertex in each part and exclude them from further construction.
		
We divide the sets of vertices in each part into two equal disjoint subsets $U = U_{1} \cup U_{2}$, $V = V_{1} \cup V_{2}$.
Inside each set, we number the vertices from $1$ to $m$.
		
Consider the set $Y^{min}_{n,k}$ of subgraphs of the following form: for any subset
$$\{x_{1},x_{2},\ldots,x_{s}\} \subset \{1,2\ldots m\},$$
we construct a bipartite subgraph $x$ that includes vertices with numbers $\{x_{1},\ldots,x_{s}\}$ from the sets $U_{1}, V_{1}$, and $V_{2}$.
		
Let us prove that the cones corresponding to subgraphs of $Y^{min}_{n,k}$ are pairwise adjacent.
We consider two arbitrary subgraphs $x,y$ of $Y^{min}_{n,k}$.
Let the function of edge weights $c$ has the following form (Fig. \ref {Fig_3}):
\begin{gather*}
c_{i,j} = 
\begin{cases}
0,& \mbox {if } i \in U(x) \backslash U(y),\ j \in V(x),\\
& \mbox{or } i \in U(y) \backslash U(x),\ j \in V(y),\\
& \mbox{or } i \in U(x) \cup U(y),\ j \in V(x) \cap V(y),\\ 
1,& \mbox {if } i \in U(x) \cap U(y),\ j \in V(x) \triangle V(y),\\
\infty,& \mbox {otherwise.}
\end{cases}
\end{gather*}
		
\begin{figure}[h]
\centering
\includegraphics[width=5in]{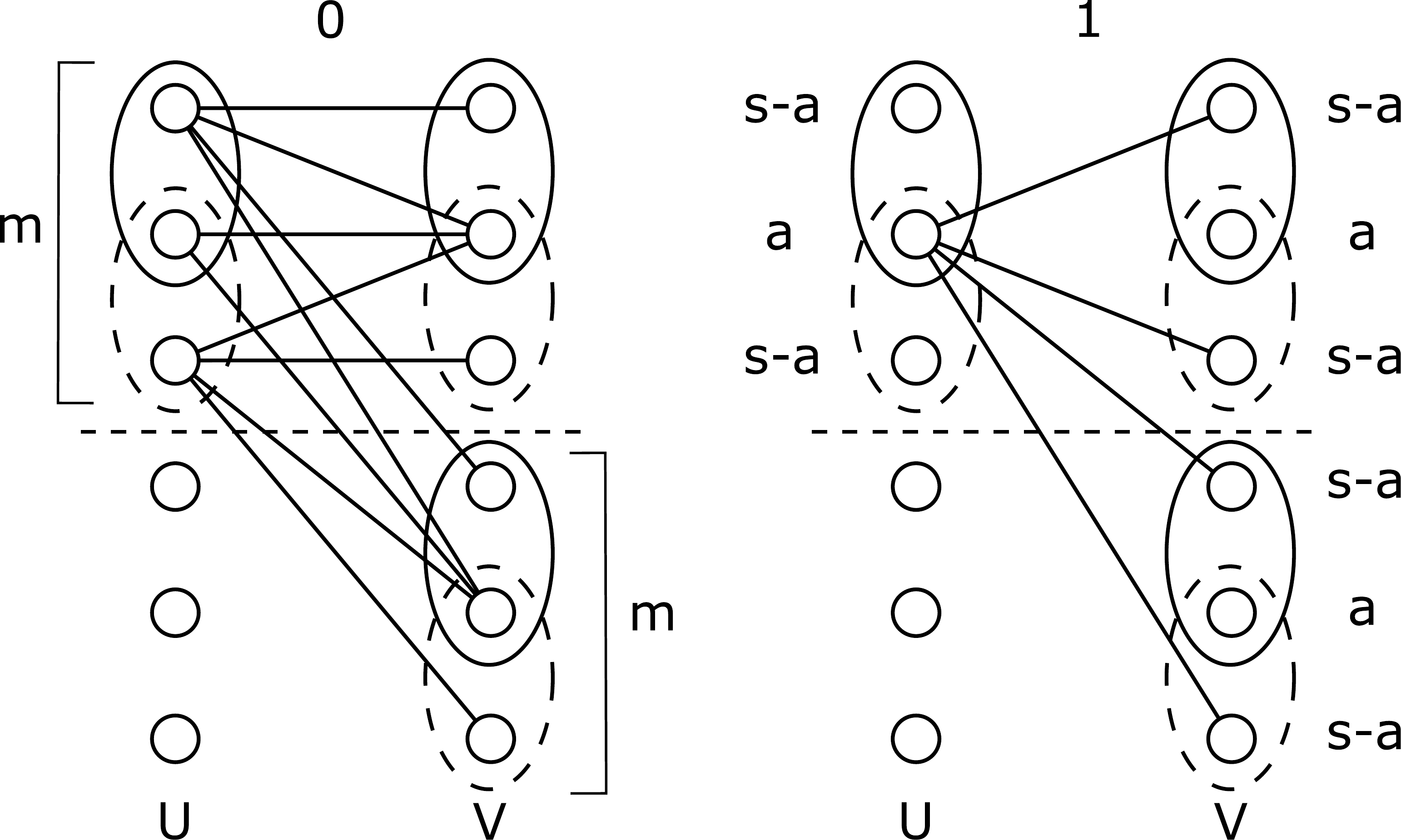}
\caption {The weight function for minimum unbalanced subgraphs}
\label {Fig_3}
\end{figure}
		
Let $|U(x) \cap U(y)| = a$, then
\begin{equation} \label {minXY}
\left\langle c,x\right\rangle = \left\langle c,y\right\rangle = 2 a (s-a).
\end{equation}
		
Note that by construction of the set $Y^{min}_{n,k}$:
\begin{equation} \label {aIneq}
2s-m \leq a < s.
\end{equation}
		
We consider an arbitrary bipartite subgraph $z$ with the number of vertices $k = 3s$.
		
\begin{itemize}
\item If $z$ includes at least one vertex of $U$ that does not belong to $U(x) \cup U(y)$, then $\left\langle c,z\right\rangle=\infty$, since all edges incident to this vertex have infinite weight.
\item If $z$ includes both vertices of $U(x) \backslash U(y)$ and $U(y) \backslash U(x)$, then in the right part $V$ only the vertices of $V(x) \cap V(y)$ are connected with them by edges of finite weight simultaneously:
\begin{gather*}
|V(x) \cap V(y)| = 2a,\\
|U(x) \cup U(y)| = 2s - a,\\
(2s - a) + 2a = 2s + a < 3s = k.
\end{gather*}
In this case, there are not enough vertices for a bipartite subgraph $z$ of finite weight. 
\item If $U(z) \subseteq U(x)$, and $z$ includes at least one vertex of $U(x) \backslash U(y)$, then in the right part only vertices of $V (x)$ have edges of finite weight adjacent to this vertex.
In this case, if $\left\langle c,z\right\rangle < \infty$, then $z = x$.
The case with a vertex from $U(y) \backslash U(x)$ is treated similarly.
			
\item If $U(z) \subseteq U(x) \cap U(y)$, then
\begin{equation} \label{minZ}
\left\langle c,z\right\rangle = b (3s - b - 2a),
\end{equation}
where $b = |U(z)|$.
By construction, $b$ satisfies the following constraints:
\begin{equation} \label{bIneq}
3s - 2m \leq b \leq a.
\end{equation}
Suppose that the weight of the subgraph $z$ (\ref {minZ}) does not exceed the weights of $x$ and $y$ (\ref {minXY}), then, taking into account the inequalities (\ref {kIneq}, \ref {aIneq}, \ref {bIneq}), we have the following system
\begin{gather*}
\begin{cases}
b (3s - b - 2a) \leq 2 a (s-a),\\
\frac {3}{4} m < s < m,\\
2s-m \leq a < s,\\
3s - 2m \leq b \leq a,\\
m,s,a,b > 0,
\end{cases}
\end{gather*}
that has no solutions for any values of the parameters $m,s,a,b$.
\end{itemize}
		
Thus, the weight of any subgraph $z$ is strictly greater than the weights of $x$ and $y$.
By the condition (\ref{adjacency-cones}), the cones $K^{min}_{n,k} (x)$ and $K^{min}_{n,k} (y)$ are adjacent. 
The set $Y^{min}_{n,k}$ forms a clique in the graph of the cone decomposition.
By construction, we obtain
$$|Y^{min}_{n,k}| = {m \choose s} = \Omega \left(\left\lfloor \frac{3n}{2k}\right\rfloor^{\frac{k}{3}}\right).$$
\end{proof}

\section{Conclusion}

The problems of constructing an optimal complete bipartite subgraph have been investigated by many authors and have many practical applications.
For three considered problems of the balanced subgraphs with arbitrary edges and unbalanced subgraphs of minimum and maximum weight with nonnegative edges, NP-completeness of problems and superpolynomial clique numbers of 1-skeletons and graphs of the cone decompositions are established.
In all three cases, the polyhedral characteristics correlate with the complexity of the problems.
	
It should be noted that the minimum complete bipartite subgraph problem is NP-hard, although the close problem of the existence of an unbalanced bipartite subgraph on $k$ vertices is polynomially solvable.
Perhaps in this regard, the minWCBS problem turned out to be the most difficult to analyze, and the complexity of the dual problem of the maximum $q$-vertex cover in a bipartite graph remained unknown for many years \cite {Apollonio}.

%\newpage
\renewcommand{\refname}{References}
{%\footnotesize
	}
\medskip

\begin{thebibliography}{99}
		
\bibitem{Apollonio}
\by Apollonio N., Simeone B.
\paper The maximum vertex coverage problem on bipartite graphs
\jour Discrete Applied Mathematics
\yr 2014
\vol 165
\pages 37--48


\bibitem{Arbib}
\by Arbib C., Mosca R.
\paper Polynomial algorithms for special cases of the balanced complete bipartite subgraph problem
\jour Journal of Combinatorial Mathematics and Combinatorial Computing
\yr 1999
\vol 39
\pages 3--22


\bibitem {Bondarenko-TSP}
\by Bondarenko~V.\,A.
\paper Nonpolynomial lowerbound of the traveling salesman problem complexety in one class of algorithms
\jour Automation and remote control
\yr 1983
\vol 44
\issue 9
\pages 1137--1142


\bibitem {Bondarenko-Maksimenko}
\by Bondarenko~V.\,A., Maksimenko~A.\,N.
\book Geometricheskie konstruktsii i slozhnost' v kombinatornoy optimizatsii
\publaddr Moscow
\publ LKI
\yr 2008
\finalinfo (in Russian)


\bibitem {Bondarenko-Nikolaev-13}
\by Bondarenko~V.\,A., Nikolaev~A.\,V.
\paper Combinatorial and Geometric Properties of the Max-Cut and Min-Cut Problems
\jour Doklady Mathematics
\yr 2013
\vol  88
\issue 2
\pages 516--517


\bibitem {Bondarenko-Nikolaev-16}
\by Bondarenko~V., Nikolaev~A.
\paper On Graphs of the Cone Decompositions for the Min-Cut and Max-Cut Problems
\jour International Journal of Mathematics and Mathematical Sciences
\yr 2016
\vol 2016
\pages 6 pages
\finalinfo Article ID 7863650


\bibitem {Bondarenko-Shovgenov}
\by Bondarenko~V.\,A., Nikolaev~A.\,V., Shovgenov~D.\,A. 
\paper 1-Skeletons of the Spanning Tree Problems with Additional Constraints
\jour Modeling and Analysis of Information Systems
\yr 2015
\vol 22
\issue 4
\pages 453--463
\finalinfo (in Russian)


\bibitem{Cheng}
\by Cheng Y., Church G.\,M.
\paper Biclustering of expression data
\proc Proceedings of the Eighth International Conference on Intelligent Systems for Molecular Biology
\yr 2000
\pages 93--103


\bibitem{Diestel}
\by Diestel R.
\book Graph Theory
\publ Springer-Verlag Berlin Heidelberg
\totalpages 410
\yr 2010


\bibitem{Feige}
\by Feige U., Kogan S.
\book Hardness of approximation of the Balanced Complete Bipartite Subgraph problem. Tech. Rep. MCS04-04
\publ Dept. of Comp. Sci. and Appl. Math., The Weizmann Inst. of Science
\yr 2004


\bibitem{Garey}
\by Garey M.\,R., Johnson D.\,S.
\book Computers and Intractability: A Guide to the Theory of NP-Completeness
\publaddr New York, NY, USA
\publ W. H. Freeman \& Co.
\totalpages 340
\yr 1979


\bibitem{Grotschel}
\by Gr{\"o}tschel M., Lovasz L., Schrijver A.
\book Geometric Algorithms and Combinatorial Optimization
\publ Springer--Verlag Berlin Heidelberg
\yr 1993
\totalpages 362


\bibitem{Johnson}
\by Johnson D.\,S.
\paper The NP-completeness column: An ongoing guide
\jour Journal of Algorithms
\yr 1987
\vol 8
\issue 3
\pages 438--448


\bibitem{Joret}
\by Joret G., Vetta A.
\paper Reducing the rank of a matroid
\jour Discrete Mathematics \& Theoretical Computer Science
\yr 2015
\vol 17
\issue 2
\pages 143--156


\bibitem{Hopcroft}
\by Hopcroft J.\,E., Karp R.\,M.
\paper An $n^{5/2}$ Algorithm for Maximum Matchings in Bipartite Graphs
\jour SIAM Journal on Computing
\yr 1973
\vol 2
\issue 4
\pages 225--231


\bibitem {Maksimenko}
\by Maksimenko~A.\,N.
\paper Combinatorial properties of the polyhedron associated with the shortest path problem
\jour Comput. Math. Math. Phys.
\yr 2013
\vol  88
\issue 2
\pages 1611--1614


\bibitem {Mubayi}
\by Mubayi D., Tur{\`a}n G.
\paper Finding bipartite subgraphs efficiently
\jour Information Processing Letters
\yr 2010
\vol 110
\issue 5
\pages 174--177


\bibitem {Ravi}
\by Ravi S.\,S., Lloyd E.\,L.
\paper The complexity of near-optimal programmable logic array folding
\jour SIAM Journal on Computing
\yr 1988
\vol 17
\issue 4
\pages 696--710
		
\end{thebibliography}
\end{document}